\newfont{\bb}{msbm10 at 12pt}
\def\r{\hbox{\bb R}}
\def\e{\hbox{\bf E}}
\def\t{\hbox{\bf T}}
\def\b{\hbox{\bf B}}
\newtheorem{theorem}{Theorem}[section]
\newtheorem{definition}[theorem]{Definition}
\newtheorem{proposition}[theorem]{Proposition}
\newtheorem{remark}[theorem]{Remark}
\begin{document}

\title{Helicoidal surfaces in Minkowski space with constant mean  curvature  and constant Gauss curvature}
\author{Rafael L\'opez\footnote{Corresponding author. The first author is partially
supported by MEC-FEDER
 grant no. MTM2007-61775 and
Junta de Andaluc\'{\i}a grant no. P06-FQM-01642.}\\
Departamento de Geometr\'{\i}a y Topolog\'{\i}a\\
Universidad de Granada\\
18071 Granada, Spain \\
email: rcamino@ugr.es\\
\vspace*{.5cm}\\
 Esma Demir  \\
  Department of Mathematics\\
University of Ankara\\
Tandon\u{g}an, Ankara 06100, Turkey}

\date{}

\maketitle

\begin{abstract}  In this work we find all helicoidal surfaces in Minkowski space  with constant mean curvature
whose generating curve is a the graph of a polynomial or   a Lorentzian circle. In the first case, we prove that  the degree of the polynomial is $0$ or $1$ and that the surface is ruled. If the generating curve is a Lorentzian circle, we show that the only possibility is that the axis is spacelike and the center of the circle lies in the axis.
\end{abstract}

{\bf MSC classification.} 53A10

{\bf Keywords.}  Minkowski space,  helicoidad surface, mean curvature, Gauss curvature

%%%%%%%%%%%%%%%%%%%%%%%%%%%%%%%%%%%
\section{Introduction and statement of results}
%%%%%%%%%%%%%%%%%%%%%%%%%%%%%%%%%%%

Consider the Lorentz-Minkowski space $\e_1^3$, that is, the three-dimensional real vector space $\r^3$ endowed the metric $\langle,\rangle$ given by
$$\langle (x,y,z),(x',y',z')\rangle=xx'+yy'-zz',$$
where $(x,y,z)$ are the usual coordinates of $\r^3$. A Lorentzian motion of $\e_1^3$ is a Lorentzian rotation around an axis $L$ followed by a translation.  A helicoidal surface in Minkowski space $\e_1^3$ is a surface invariant by a uniparametric group $G_{L,h}=\{\phi_t;t\in\r\}$ of helicoidal motions. Each group of helicoidal motions is characterized by  an axis $L$ and a pitch  $h\not=0$ and each helicoidal surface is given by a group of helicoidal motions and a generating curve $\gamma$. In particular,  a helicoidal surface parametrizes as $X(s,t)=\phi_t(\gamma(s))$, $t\in\r$, $s\in I\subset\r$.

The first part of this work is motivated by the results that appear in   \cite{dk}. The authors study in \cite{dk} helicoidal surfaces generated by a straight-line (called \emph{helicoidal ruled surfaces}). Among the examples, we point out the surfaces named \emph{right Lorentzian helicoids} by Dillen and K\"{u}hnel, that is, the axis $L$ is timelike or spacelike and the curve $\gamma$ is one of the coordinate axis.  These surfaces are the helicoid of first kind (if $L$ is timelike), the helicoid of second type (if $L$ is spacelike and  $\gamma$ is the $y$-axis) and the helicoid of third type (if $L$ is spacelike and  $\gamma$ is the $z$-axis). These three surfaces have zero mean curvature. When the axis is lightlike, there are two known helicoidal surfaces generated by a straight-lines and
called in the literature  Lie's minimal surface or  Cayley's surface and the parabolic null cylinder (\cite{dk,mp,st}). All these surfaces have zero mean curvature.  In this paper we consider a generalization of this setting.  In fact, we take $\gamma$  the graph of a polynomial $f(s)=\sum_{n=0}^m a_n s^n$ and we ask if the corresponding helicoidal surface has constant mean curvature.   We prove in Section \ref{sectt1}:

\begin{theorem}\label{t1} Consider a  helicoidal surface in $\e_1^3$ with constant mean curvature $H$ whose generating curve is the graph of a polynomial $f(s)=\sum_{n=0}^m a_n s^n$. Then $m\leq 1$, that is, the generating curve is a
 straight-line. Moreover, and after a rigid motion of $\e_1^3$,
\begin{enumerate}
\item If the axis is timelike $L=<(0,0,1)>$,   the surface is the helicoid of first kind ($H=0$), the surface    $X(s,t)=(s\cos{(t)},s\sin({t}),\pm s+a_0 +ht)$, $a_0\in\r$ with  $|H|=1/h$ or the Lorentzian cylinder $x^2+y^2=r^2$ with $|H|=1/(2r)$.
\item If the axis is spacelike $L=<(1,0,0)>$, then $H=0$. The surface is the helicoid of second kind, the helicoid of third kind or the surface parametrized by $X(s,t)=(ht,(\pm s+a_0)\sinh{(t)}+s\cosh{(t)},(\pm s+a_0)\cosh{(t)}+s\sinh{(t)})$,  $a_0\not=0$.
\item If the axis is lightlike $L=<(1,0,1)>$, then $H=0$ and the surface is the Cayley's surface or the parabolic null cylinder.
\end{enumerate}
\end{theorem}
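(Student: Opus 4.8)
The plan is to treat the three causal types of the axis separately, since each determines a different explicit form for the one-parameter group $\{\phi_t\}$ of helicoidal motions. For the timelike axis $L=\langle(0,0,1)\rangle$ the rotational part is the Euclidean rotation of the spacelike planes $z=\text{const}$; for the spacelike axis $L=\langle(1,0,0)\rangle$ it is a boost of the Lorentzian planes $x=\text{const}$; and for the lightlike axis $L=\langle(1,0,1)\rangle$ it is a parabolic (null) rotation. In each case I would write $\phi_t$ as an explicit matrix followed by a translation of size $ht$ along $L$, place the generating curve in the half-plane determined by the axis, and parametrize $X(s,t)=\phi_t(\gamma(s))$. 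Because a graph in that half-plane can be read either as ``height over the radial coordinate'' or as ``radial coordinate over the height,'' I would carry both parametrizations $\gamma(s)=(s,0,f(s))$ and $\gamma(s)=(f(s),0,s)$ (and their analogues for the other axes); the second is what later produces the Lorentzian cylinder.

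Next I would compute $E,F,G$ and, using the Lorentzian cross product $X_s\times X_t$, the unnormalized normal together with the coefficients $\widetilde L,\widetilde M,\widetilde N$. The helicoidal symmetry forces all of these to depend on $s$ alone, so the requirement that $H$ equal a prescribed constant collapses to a single second-order ODE in $f(s)$ of the schematic form
$$
A(s,f')\,f''+B(s,f')=2H\,|EG-F^2|^{3/2},
$$
where $A$ and $B$ are polynomials in $s$ and $f'$ and $EG-F^2$ is itself polynomial in $s$ and $f'$. The causal character of the surface enters only through the signs of $EG-F^2$ and of $\langle N,N\rangle$, which I would record but which do not affect the degree bookkeeping below.

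The heart of the argument is a degree count. Substituting $f=\sum_{n=0}^m a_ns^n$ and isolating the radical, I would square to obtain a genuine polynomial identity $\big(Af''+B\big)^2=4H^2(EG-F^2)^3$ in $s$. When $H=0$ the right-hand side vanishes and the top-degree term of $Af''+B$, which comes from the cubic $f'^3$ contribution and has degree $3m-1$, exceeds the degree of every other term once $m\ge 2$, so it cannot cancel and forces $m\le 1$. When $H\neq 0$ the right-hand side has strictly larger degree ($6m$ against $6m-2$) as long as $EG-F^2$ retains its full degree $2m$, which again forces $m\le 1$. The remaining possibility, $H\neq 0$ with $f'=\pm1$ constant, makes $1-f'^2\equiv 0$ so that $EG-F^2$ collapses to the constant $-h^2$; this degenerate branch falls outside the generic count and must be analyzed directly. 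I expect this to be the main technical obstacle: the generic degree estimate silently assumes $EG-F^2$ has full degree, and every exceptional surface in the statement lives precisely where that assumption fails (the ruled surfaces with $|H|=1/h$ for the timelike axis, and the cylinder with $|H|=1/(2r)$ arising from the second parametrization).

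Finally, with $m\le 1$ established, I would set $f=a_0+a_1s$ and substitute back into the ODE in each causal case to pin down the admissible triples $(a_0,a_1,H)$. For the timelike axis this recovers the helicoid of the first kind ($a_1=0$, $H=0$), the surfaces with $f'=\pm1$ and $|H|=1/h$, and the Lorentzian cylinder obtained from $\gamma(s)=(f(s),0,s)$ with $f$ constant. For the spacelike and the lightlike axes the same substitution forces $H=0$ and identifies the surface among the known minimal examples listed in the statement (the helicoids of second and third kind, Cayley's surface, and the parabolic null cylinder). Matching each surviving solution to its named surface through an explicit rigid motion of $\e_1^3$ completes the classification.
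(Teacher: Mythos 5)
Your proposal follows essentially the same route as the paper: split by the causal character of the axis, parametrize $X(s,t)=\phi_t(\gamma(s))$ with the vertical-line/second-graph case treated separately, square the CMC equation into a polynomial identity in $s$, and run a leading-coefficient degree count that forces $m\le 1$, with the exceptional surfaces appearing exactly on the degenerate branch where $1-f'^2\equiv 0$ collapses $EG-F^2$ to a constant. The only caveat is that your quoted degrees ($3m-1$, $6m$ vs.\ $6m-2$) are specific to the timelike axis --- for the spacelike and lightlike axes the paper's corresponding degrees are different (e.g.\ $12m-6$ vs.\ $6m-4$ in the spacelike case) --- but the same principle applies and your argument goes through case by case just as in the paper.
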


In   Section \ref{sectt1}, we will also study helicoidal surfaces where $H^2-K=0$. Recall that in Minkowski space, there are non-umbilical timelike surfaces where $H^2-K=0$. We will find all such surfaces when the generating curve is the graph of a polynomial.

The motivation of the second part of this article comes from helicoidal surfaces whose generating curve is a Lorentzian circle of $\e_1^3$. For example, we consider the curve
$\gamma(s)=(0,r\cosh{(s)},r\sinh{(s)})$, $r>0$, and we apply a group of helicoidal motions whose axis is
$L=<(1,0,0)>$. The corresponding surface  is the timelike hyperbolic cylinder $y^2-z^2=r^2$ with $|H|=1/(2r)$. Similarly, one can take the curve   $\gamma(s)=(0,r\sinh{(s)},r\cosh{(s)})$, obtaining the spacelike hyperbolic cylinder  $y^2-z^2=-r^2$. In this case   $|H|=1/(2r)$ again. In \cite{mp}, the authors call {\it right circular cylinders} those helicoidal surfaces generated by circles. From these examples,
 we consider the problem of finding all  helicoidal surfaces with constant mean curvature whose generating curve is a Lorentzian circle of $\r^3$, and we ask if the above examples are the only ones possible. We conclude:

\begin{theorem}\label{t2} Consider a helicoidal surface in $\e_1^3$ with constant mean curvature $H$ whose generating curve is a Lorentzian circle of $\e_1^3$. Then the axis of the surface is spacelike  and $H\not=0$. Moreover the center of the circle lies in the axis and, up a rigid motion of $\e_1^3$, the surface is one the hyperbolic cylinders  $y^2-z^2=\pm r^2$.
\end{theorem}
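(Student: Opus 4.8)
The plan is to treat the three causal types of the axis separately and, in each case, to reduce the constant mean curvature condition to a single identity in the parameter of the generating circle from which algebraic constraints can be extracted. First I would record, for each type of axis, the explicit one-parameter group of helicoidal motions. For the timelike axis $L=\langle(0,0,1)\rangle$ one has $\phi_t(x,y,z)=(x\cos t-y\sin t,\ x\sin t+y\cos t,\ z+ht)$; for the spacelike axis $L=\langle(1,0,0)\rangle$ one has $\phi_t(x,y,z)=(x+ht,\ y\cosh t+z\sinh t,\ y\sinh t+z\cosh t)$; and for the lightlike axis $L=\langle(1,0,1)\rangle$ the corresponding parabolic group. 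Writing $X(s,t)=\phi_t(\gamma(s))$ and using that each $\phi_t$ is an isometry, so that $X(s,t+t_0)=\phi_{t_0}(X(s,t))$, the mean curvature satisfies $H(s,t)=H(s,0)$; hence $H=H(s)$ and the hypothesis $H=\text{const}$ becomes a single ODE in $s$ along $\gamma$.

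Next I would impose that $\gamma$ is a Lorentzian circle, i.e. a curve of constant curvature lying in an affine plane $\Pi$ with center $c$ and radius $r$. According to the causal character of $\Pi$ I would write $\gamma(s)=c+r\,\sigma(s)$, where $\sigma$ traces a Euclidean circle $(\cos s,\sin s)$, a hyperbola $(\cosh s,\sinh s)$, or a parabola. Since the coordinates of $\gamma,\gamma',\gamma''$ are then elementary functions of $s$, substituting into the relation $H(s)=H_0$ produces an expression of the form $\sum_k P_k(c,r,h,H_0)\,\Theta_k(s)=0$, in which the $\Theta_k$ are linearly independent functions of $s$ (powers and products of $\cos s,\sin s$, or of $\cosh s,\sinh s$). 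Forcing every coefficient $P_k$ to vanish converts the problem into a finite algebraic system for the data $(c,r,h,H_0)$ together with the position and causal type of $\Pi$.

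I would then run the case analysis. For the spacelike axis I expect the equations $P_k=0$ to force the center $c$ onto $L$ and the plane $\Pi$ to be $\{x=0\}$, leaving exactly $\gamma(s)=(0,r\cosh s,r\sinh s)$ or $\gamma(s)=(0,r\sinh s,r\cosh s)$ and yielding the hyperbolic cylinders $y^2-z^2=\pm r^2$ with $H=\pm1/(2r)\neq0$; the value $H_0=0$ is excluded because the corresponding system admits no circle solution. For the timelike and the lightlike axes I expect the system to admit, for a genuine pitch $h\neq0$, only degenerate configurations — a circle centered on the axis in a plane orthogonal to it, which merely reproduces a cylinder of revolution and not a genuine helicoidal surface — or no solution at all. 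This is precisely what forces the axis to be spacelike.

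The main obstacle is this elimination of the timelike and lightlike axes, together with the off-center and tilted-plane configurations in the spacelike case. Two points require care. First, the causal character of the surface may change along $\gamma$, so the mean curvature formula carries sign factors coming from $\langle X_s,X_s\rangle$, $\langle X_t,X_t\rangle$ and from the normalization of the unit normal, and these must be kept consistent. Second, one must organize the coefficients $P_k$ so that the lowest-order equations already pin down the center and the causal type of $\Pi$; only once $c\in L$ is known does the residual system collapse to the two admissible hyperbolic profiles. Verifying that no simultaneous solution of all the equations $P_k=0$ survives outside the asserted cases is the computational heart of the argument.
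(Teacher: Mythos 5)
Your overall strategy coincides with the paper's: fix the causal type of the axis, use the explicit helicoidal groups, write the circle with unknown center and radius, reduce the condition $H=\mathrm{const}$ (in the paper, the identity $4H^2|EG-F^2|^3-H_1^2=0$) to the vanishing of the coefficients of linearly independent functions of $s$ --- $\cosh(n(s+\theta))$, $\sinh(n(s+\theta))$ in the hyperbolic cases, powers of $s$ in the parabolic case --- and eliminate. Your predicted outcomes (center forced onto the axis and $|H|=1/(2r)$ for a spacelike axis; no solutions for timelike or lightlike axes) are exactly what the paper's computations yield, although as written your text only announces these eliminations (``I expect the equations to force\dots''), whereas carrying them out is the entire content of the proof.

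There is, however, a genuine flaw in your framing. You allow the plane $\Pi$ of the circle to have arbitrary position and causal type, and you then dispose of the configuration ``circle centered on a timelike axis in a plane orthogonal to it'' by calling it ``not a genuine helicoidal surface.'' That justification is false: the Lorentzian cylinder $x^2+y^2=r^2$ \emph{is} a genuine helicoidal surface with timelike axis (it is invariant under $G_{L,h}$ for every $h$, and it appears as such in Theorem \ref{t1}), it has constant mean curvature $|H|=1/(2r)$, and it is the orbit under the helicoidal group of the Lorentzian circle $s\mapsto(r\cos s,r\sin s,0)$. So with your broad reading of ``generating curve,'' this surface is an outright counterexample to the claim that the axis must be spacelike, and no amount of coefficient-chasing will exclude it. What makes the theorem true, and what the paper uses from the start, is the convention of Proposition \ref{pr2}: the generating curve is taken in a plane \emph{containing} a timelike axis, in a plane \emph{orthogonal} to a spacelike axis, and in the unique degenerate plane containing a lightlike axis. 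With this, the circle is necessarily a hyperbola (timelike and spacelike axes) or a parabola (lightlike axis) in a fixed plane, the only unknowns are the in-plane center, a phase and the radius, and the horizontal-circle configuration never enters the analysis; it also removes the ``tilted-plane'' cases you worry about in the spacelike case. You must invoke (or reprove) that proposition; without it your case analysis cannot be closed in the timelike case.
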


We end  studying helicoidal surfaces with constant Gauss curvature. When the axis is timelike, the second surface in Theorem \ref{t1}   has $K=1/h^2$. On the other hand, the examples of Theorem \ref{t2} satisfy $K=0$. We show that they are the only possible under the same conditions as in Theorems \ref{t1} and \ref{t2}.

\begin{theorem}\label{t3} Consider a helicoidal surface in $\e_1^3$ with constant Gauss curvature $K$.
\begin{enumerate}
\item If the generating curve is the graph of a polynomial $f(s)=\sum_{n=0}^m a_n s^n$, then $m\leq 1$. If
the axis is timelike, the surface is the Lorentzian cylinder $x^2+y^2=r^2$ ($K=0$) or
 the surface $X(s,t)=(s\cos{(t)},s\sin({t}),\pm s+a_0 +ht)$ with  $K=1/h^2$; if the axis is spacelike, the surface is $X(s,t)=(ht,(\pm s+a_0)\sinh{(t)}+s\cosh{(t)},(\pm s+a_0)\cosh{(t)}+s\sinh{(t)})$,  $a_0\not=0$ ($K=0$); if the axis is lightlike, the surface is the parabolic null cylinder ($K=0$).
\item  If the generating curve is a circle, then   the axis  is spacelike, $K=0$,  the center of the circle lies in the axis and the  surface is one of the hyperbolic cylinders $y^2-z^2=\pm r^2$.
    \end{enumerate}
\end{theorem}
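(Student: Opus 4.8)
The plan is to run, case by case on the causal character of the axis $L$, the same local computation that underlies Theorems \ref{t1} and \ref{t2}, now imposing that $K$ be constant in place of $H$. For each of the three normalized axes $L=\langle(0,0,1)\rangle$ (timelike), $L=\langle(1,0,0)\rangle$ (spacelike) and $L=\langle(1,0,1)\rangle$ (lightlike) I would use the explicit helicoidal parametrization $X(s,t)=\phi_t(\gamma(s))$ already fixed in Section \ref{sectt1}, where $\phi_t$ is the corresponding Lorentzian screw motion of pitch $h$. Because $X$ is invariant under $G_{L,h}$, the coefficients $E,F,G$ of the first fundamental form and those of the second depend only on $s$; hence $K=\varepsilon\,\det\mathrm{II}/(EG-F^2)$ is a function of $s$ alone, and multiplying through by the nowhere vanishing factor $EG-F^2$ turns the condition $K=\mathrm{const}$ into a single ordinary differential equation in $s$ involving $f,f',f''$ (graph case) or the components of $\gamma$ (circle case). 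The sign $\varepsilon=\pm1$ records the causal character of the surface and must be tracked.

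For part (1) I take $\gamma$ to be the graph of $f(s)=\sum_{n=0}^m a_n s^n$. For the timelike axis, for instance, $EG-F^2=s^2(1-f'^2)-h^2$, and the equation $K\,(EG-F^2)=\varepsilon\det\mathrm{II}$ clears, after removing the square roots coming from the normalized normal (by squaring and keeping track of $\varepsilon$), to a polynomial identity in $s$. The crux is a degree count: assuming $m\ge 2$ and comparing the top-order terms in $s$ should force the leading coefficient $a_m$ to vanish, a contradiction. This is the step I expect to require the most care, since several highest-degree terms can cancel at once and one must verify that the surviving coefficient is genuinely nonzero. Once $m\le1$ is established, $\gamma$ is a straight line, the ODE collapses to an algebraic condition on $a_0,a_1$, and solving it in each axis case while reading off the resulting value of $K$ recovers exactly the listed surfaces: the Lorentzian cylinder ($K=0$) and the helicoid-type surface with $K=1/h^2$ for the timelike axis, the ruled surface with $K=0$ for the spacelike axis, and the parabolic null cylinder for the lightlike axis. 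Because the null rotation makes its computation heaviest, I would treat the lightlike axis last.

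For part (2) I substitute a Lorentzian circle for $\gamma$: according to its causal type this is, up to position, $r(\cos s,\sin s)$, $r(\cosh s,\sinh s)$ or $r(\sinh s,\cosh s)$ placed in a coordinate plane, with a center not assumed to lie on $L$. Plugging into the same equation $K\,(EG-F^2)=\varepsilon\det\mathrm{II}$ and expanding in the relevant trigonometric or hyperbolic functions, constancy of $K$ should force the vanishing of every non-constant mode; this simultaneously rules out the timelike and lightlike axes, pins the center onto the axis, and leaves only the two hyperbolic cylinders $y^2-z^2=\pm r^2$, for which a direct check gives $K=0$. The main obstacle throughout is bookkeeping: the sign $\varepsilon$, the three inequivalent circle types, and the null-rotation case each multiply the number of subcases, and the degree and mode-cancellation arguments must be done carefully so that no borderline coefficient vanishes by accident. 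Since the fundamental-form computations coincide with those already carried out for Theorems \ref{t1} and \ref{t2}, most of them can be imported, changing only the final scalar equation from the mean-curvature to the Gauss-curvature one.
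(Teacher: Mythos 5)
Your overall strategy is the one the paper uses: turn constancy of $K$ into an identity in $s$, eliminate $m\ge 2$ by a leading-coefficient count, then solve the low-degree cases; and, for circles, force the non-constant modes to vanish as in the proof of Theorem \ref{t2}. However, as written, your plan has two genuine gaps. The first concerns part (1): you never treat the generating curves that are \emph{not} graphs in the parametrized direction, and these are exactly where two of the surfaces in the statement come from. For the timelike axis, the graph analysis with $\gamma(s)=(s,0,f(s))$ is \emph{inconsistent} with $K=0$ (the polynomial identity reduces to $h^2=0$), so the Lorentzian cylinder $x^2+y^2=r^2$ cannot be ``read off'' from your ODE as you expect: it arises only from the vertical line $\gamma(s)=(r,0,s)$, which must be handled as a separate subcase. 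Likewise, for the lightlike axis every graph $\gamma(s)=(f(s),s,f(s))$ leads to a contradiction whether $K=0$ or $K\neq 0$, and the parabolic null cylinder arises only from the non-graph curve $\gamma(s)=(s,b,s)$; without this case split you would wrongly conclude that no surface exists for that axis. A smaller point in the same part: no squaring is needed. Unlike $H$, whose expression (\ref{h1}) carries the factor $(-\epsilon(EG-F^2))^{3/2}$, the normalizing radicals cancel in $eg-f^2$, so by (\ref{hk22}) the condition is directly the polynomial identity $KW^2+K_1=0$. If you square as proposed you only get $(KW^2)^2=K_1^2$, compatible with both signs $KW^2=\pm K_1$; the degree count still gives $m\le 1$, but for the timelike axis you would only conclude $K=\pm 1/h^2$ and would have to return to the unsquared equation to pin down $K=1/h^2$.

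The second gap is in part (2): your inventory of generating circles is wrong. By Proposition \ref{pr2} the generating curve lies in a plane containing a timelike or spacelike axis, or in the degenerate plane containing a lightlike axis; in all three cases that plane is timelike or degenerate, never spacelike. Hence the Euclidean circle $r(\cos s,\sin s)$, which always lies in a spacelike plane, can never occur as a generating curve, while the circle needed for the lightlike axis is missing from your list: in the degenerate plane the orbit of a point under the rotation group is a \emph{parabola}, up to a motion $s\mapsto cs(0,1,0)+\frac{c}{2}s^2(1,0,1)$. For that case the identity $KW^2+K_1=0$ is a polynomial equation in $s$, not a linear combination of $\cosh(ns)$ and $\sinh(ns)$, so your claim that mode-vanishing ``simultaneously rules out the timelike and lightlike axes'' fails for the lightlike axis; it must be excluded by the polynomial leading-coefficient argument, as the paper does for $H$ in Section \ref{sectt2}. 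With these repairs (hyperbolas $x^2-z^2=\pm r^2$, $y^2-z^2=\pm r^2$ with arbitrary center in their plane for the timelike and spacelike axes, the parabola for the lightlike axis, and the separate non-graph subcases in part (1)), your plan does reduce to the paper's proof.
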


Throughout this work, we will assume that the helicoidal motions are not rotational, that is, $h\not=0$. Rotational surfaces with constant mean curvature and constant Gauss curvature have been studied in \cite{hn1,hn2,ko,lo1,lo2}.

This article is organized beginning with the Section \ref{sectd} where we present the parametrizations of helicoidal surfaces as well as the definition of a Lorentzian circle in $\e_1^3$. Next in Section \ref{sectc} we recall the definition of the  mean curvature and the Gauss curvature of a non-degenerate surface, describing the way to compute in local coordinates. The rest of the article is the proof of the results, beginning in Section \ref{sectt1} with the Theorem \ref{t1}, and following with Sections \ref{sectt2} and \ref{sectt3} with   Theorems \ref{t2} and \ref{t3}, respectively.

{\it Acknowledgement}. This work was done during the stay of the second author in the Department of Geometry and Topology, in the University of Granada, during May and June of 2010. This article is part of her Master Thesis, whose advisors are Prof. Yayli (University of Ankara) and the first author.

%%%%%%%%%%%%%%%%%%%%%%%%%%%%%%%%%%%%%%%%%%%%%
%%%%%%%%%%%%%%%%%%%%%%%%%%%%%%%%%%%%%%
\section{Description of helicoidal surfaces of $\e_1^3$}\label{sectd}
%%%%%%%%%%%%%%%%%%%%%%%%%%%%%%%%%%%%%%%5

In this section we describe the parametrization of a helicoidal surface in $\e_1^3$ and we recall the notion of a Lorentzian circle. The metric $\langle,\rangle$ of  $\e_1^3$ divides the vectors in three types according its causal character. A  vector  $v\in\e_1^3$ is called spacelike (resp. timelike, lightlike) if  $\langle v,v\rangle>0$ or $v=0$ (resp. $\langle v,v\rangle<0$, $\langle v,v\rangle=0$ and  $v\not=0$). Given  a vector subspace $U\subset\e_1^3$, we say that $U$ is called spacelike (resp. timelike, lightlike)  if the induced metric  is positive definite (resp. non-degenerate of index $1$, degenerated and $U\not=\{0\}$). The description of Lorentzian motion groups is the following

\begin{proposition}\label{pr1} A helicoidal Lorentzian motion group is a uniparametric group of Lorentzian rigid motions which are non trivial. Any  group of helicoidal motions group is determined by an axis $L$ and a pitch $h\in\r$, which it will be denoted by  $G_{L,h}=\{\phi_t;t\in\r\}$. After a change of coordinates any helicoidal motions group is given by:
\begin{enumerate}
\item If $L$ is timelike, then $L=<(0,0,1)>$ and
\begin{equation}\label{motion1}
\phi_t(a,b,c)=\left(\begin{array}{ccc}
\cos{t}&-\sin{t}&0\\ \sin{t}&\cos{t}&0\\
0& 0&1\end{array}\right)\left(\begin{array}{c}a\\b\\c\end{array}\right)+h\left(\begin{array}{c}0\\0\\t\end{array}\right).
\end{equation}
\item If $L$ is spacelike, then $L=<(1,0,0)>$ and
\begin{equation}\label{motion2} \phi_t(a,b,c)=\left(\begin{array}{ccc}1&0&0\\
0&\cosh{t}&\sinh{t}\\
0&\sinh{t}&\cosh{t}\end{array}\right)\left(\begin{array}{c}a\\b\\c\end{array}\right)+
h\left(\begin{array}{c}t\\0\\0\end{array}\right).
\end{equation}
\item If $L$ is lightlike, then $L=<(1,0,1)>$ and
\begin{equation}\label{motion3}
\phi_t(a,b,c)=\left(
\begin{array}{ccc}
1-\frac{t^2}{2}&t &\frac{t^2}{2}\\
-t &1&t\\
-\frac{t^2}{2} & t &1+\frac{t^2}{2}
\end{array}
\right)\left(\begin{array}{c}a\\b\\c\end{array}\right)+
h\left(\begin{array}{c}\frac{t^3}{3}-t\\ t^2\\ \frac{t^3}{3}+t\end{array}\right).
\end{equation}
\end{enumerate}
If we take $h=0$, then we obtain a rotations group about the axis $L$.
\end{proposition}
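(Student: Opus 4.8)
The plan is to use that every isometry of the flat space $\e_1^3$ is affine, so each element of the group can be written as $\phi_t(p)=A(t)p+b(t)$, where $A(t)$ is a linear isometry (an element of the Lorentz group $O_1(3)$ of linear isometries of $\e_1^3$) and $b(t)\in\r^3$ is a translation. Imposing the one-parameter group law $\phi_s\circ\phi_t=\phi_{s+t}$ separates into a multiplicative relation $A(s)A(t)=A(s+t)$ and an additive cocycle relation $A(s)b(t)+b(s)=b(s+t)$. Thus $t\mapsto A(t)$ is a continuous homomorphism from $\r$ into $O_1(3)$; its image is connected, so it lies in the identity component, and (a continuous one-parameter group being automatically analytic) $A(t)=\exp(tB)$ for a unique operator $B$ that is skew-adjoint for $\langle,\rangle$, i.e. $\langle Bx,y\rangle+\langle x,By\rangle=0$. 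Non-triviality of the group means $B\neq0$.

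First I would analyse the linear part. A $\langle,\rangle$-skew-adjoint operator on the odd-dimensional space $\e_1^3$ is singular (its determinant equals its own negative), so $\ker B\neq\{0\}$; a short argument rules out $\mathrm{rank}\,B=1$, hence for $B\neq0$ one has $\dim\ker B=1$. This kernel is the axis $L$, being exactly the set of points fixed by every $A(t)$. The three cases of the statement are then dictated by the causal character of $L$: if $L$ is timelike or spacelike the orthogonal line-complement is a nondegenerate plane on which $B$ restricts to an invertible skew operator, elliptic (a rotation generator) when $L$ is timelike and hyperbolic (a boost generator) when $L$ is spacelike; if $L$ is lightlike then $L\subset L^\perp$, the metric degenerates on the invariant plane, and $B$ is nilpotent with $B^3=0$. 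Conjugating by a suitable element of $O_1(3)$ I would move $L$ onto the standard lines $<(0,0,1)>$, $<(1,0,0)>$ and $<(1,0,1)>$ and put $B$ in normal form; computing $\exp(tB)$ then gives the matrices in \eqref{motion1}, \eqref{motion2} and \eqref{motion3} respectively. In the lightlike case the series terminates, $\exp(tB)=I+tB+\tfrac{t^2}{2}B^2$, producing the quadratic entries of \eqref{motion3}, and after rescaling $t$ the angular speed is normalized to one.

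For the translation part I would solve the cocycle relation by differentiating at $t=0$: with $v=b'(0)$ one gets $b'(t)=A(t)v$, hence $b(t)=\big(\int_0^t A(\tau)\,d\tau\big)v$. A change of origin $p\mapsto p+c$ conjugates $\phi_t$ into a motion with translation $b(t)+(A(t)-I)c$, so I may subtract any term of the form $(A(t)-I)c$. This removes the component of $b(t)$ that is transverse to the axis and leaves an essential residual part; in the timelike and spacelike cases the residual is a multiple of the axis direction, its coefficient being the pitch $h$, giving the translation vectors $h(0,0,t)$ and $h(t,0,0)$ of \eqref{motion1} and \eqref{motion2}. When $h=0$ the group fixes $L$ pointwise and is a rotation group, as claimed.

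The step I expect to be the main obstacle is the lightlike case. There the axis is null, so $L$ is contained in its own orthogonal complement and the clean orthogonal splitting used in the timelike and spacelike cases is unavailable; one must instead work in an adapted null frame to bring $B$ to its nilpotent normal form. Moreover, since $B^2\neq0$, the integral $\int_0^t A(\tau)\,d\tau=tI+\tfrac{t^2}{2}B+\tfrac{t^3}{6}B^2$ contributes a genuine cubic term to $b(t)$ that cannot be absorbed by any change of origin; tracking this irremovable part is exactly what yields the cubic translation vector $h(\tfrac{t^3}{3}-t,\,t^2,\,\tfrac{t^3}{3}+t)$ of \eqref{motion3}. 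Verifying that in each of the three normal forms the residual translation is a multiple of a single fixed (possibly $t$-dependent) vector, so that it defines a well-defined scalar pitch $h$, completes the proof.
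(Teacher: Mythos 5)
The paper never proves Proposition \ref{pr1}: it is stated as the known description of uniparametric Lorentzian motion groups (with the surrounding material referred to \cite{dk,mp}), so there is no internal proof to compare yours against. Judged on its own, your argument is the standard one and its outline is correct and complete. The affine decomposition $\phi_t(p)=A(t)p+b(t)$, the splitting of the group law into the homomorphism $A(s+t)=A(s)A(t)$ and the cocycle $b(s+t)=A(s)b(t)+b(s)$, the determinant argument $\det B=-\det B$ in odd dimension, the even-rank property of skew-adjoint operators (so $\dim\ker B=1$ when $B\neq0$), the case division by the causal character of $\ker B$, the formula $b(t)=\bigl(\int_0^t A(\tau)\,d\tau\bigr)v$, and the absorption of $(A(t)-I)c$ by a change of origin all work exactly as you describe. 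Your accounting of the lightlike case is also right: there $\mathrm{ran}(B)$ is the degenerate plane $L^{\perp}$, any component of $v$ lying in $\mathrm{ran}(B)$ is completely removable (using $B^3=0$), and the residual component along a transversal such as $(-1,0,1)$ integrates to exactly $h\bigl(\tfrac{t^3}{3}-t,\,t^2,\,\tfrac{t^3}{3}+t\bigr)$, which no choice of origin can cancel; this reproduces \eqref{motion3}.

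One step does need repair: your claim that ``non-triviality of the group means $B\neq0$.'' A pure translation group $\phi_t(p)=p+tv$ is a non-trivial uniparametric group of Lorentzian rigid motions with $B=0$, it has no axis, and it appears nowhere in the classification, so as literally argued your proof (and the proposition, read without the paper's definitions) would be contradicted by this family. You must either invoke the paper's definition of a Lorentzian motion --- a rotation about an axis followed by a translation, so that a helicoidal group has non-trivial rotational part by hypothesis --- or treat $B=0$ as a separate case that is excluded from the statement. Two smaller points you should make explicit rather than gesture at: the ``short argument'' ruling out $\mathrm{rank}\,B=1$ (if $Bx=\langle w,x\rangle u$ is skew-adjoint, then $\langle w,x\rangle\langle u,x\rangle=0$ for all $x$ forces $u=0$ or $w=0$), and the normalization of the generator: in the timelike and spacelike cases this is a reparametrization $t\mapsto t/\omega$ of the group, which rescales the pitch as well, while in the lightlike case it follows from the fact that $B$ and $\lambda B$ ($\lambda>0$) are conjugate by a boost fixing $L$, which is what entitles you to the unit normal form used in \eqref{motion3}.
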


If the axis is spacelike or timelike, the translation vector is the direction of the axis.
  The following result is obtained   in \cite[Lemma 2.1]{mp} and it says how to parametrize a helicoidal surface.

\begin{proposition}\label{pr2} Let $S$ be a surface in $\e_1^3$ invariant by a group of helicoidal motions $G_{L,h}=\{\phi_t;t\in\r\}$. Then  there exists a planar curve $\gamma=\gamma(s)$ such that $S=\{\phi_t(\gamma(s));s\in I,t\in\r\}$.  The curve $\gamma$ is called a generating curve of $S$. Moreover,
 \begin{enumerate}
\item if $L$ is timelike, $\gamma$ lies in any plane containing $L$.
\item if $L $ is spacelike, then $\gamma$ lies in a orthogonal plane to $L$.
\item  if $L$ is spacelike, $\gamma$ lies in the   only degenerate plane containing $L$.
\end{enumerate}
\end{proposition}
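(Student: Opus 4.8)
The plan is to realize $\gamma$ as the intersection of $S$ with a suitable plane $P$ transverse to the flow, and to show that this plane meets every orbit of $G_{L,h}$. Since $S$ is invariant we have $\phi_t(S)=S$ for all $t$, so $S$ is a union of orbits $\{\phi_t(p):t\in\r\}$ with $p\in S$. Granting that $S\cap P$ meets each such orbit, the identity $S=\{\phi_t(\gamma(s)):s\in I,\,t\in\r\}$ follows at once: the inclusion $\supseteq$ is just the invariance of $S$, while $\subseteq$ says that any $q\in S$ lies on an orbit crossing $P$, hence $q=\phi_t(\gamma(s))$ for some $s,t$. The three ``moreover'' statements are then automatic, because $\gamma=S\cap P\subset P$ is planar by construction. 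So the whole content reduces to choosing, for each causal type of the axis in Proposition \ref{pr1}, a plane $P$ through which every orbit passes.

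The way I would produce such a $P$ is to exhibit a coordinate that varies monotonically along each orbit. For the \emph{timelike} axis, writing $(x,y)$ in polar form, the motion (\ref{motion1}) sends the angular coordinate $\theta$ to $\theta+t$ while translating along $z$; hence $\theta$ sweeps all of $\r$ along every orbit off $L$, so any half-plane bounded by $L$ is crossed. By the full rotational symmetry one is then free to take $P$ to be \emph{any} plane containing $L$, and set $\gamma:=S\cap P$. For the \emph{spacelike} axis, the first coordinate of (\ref{motion2}) equals $a+ht$, which is injective in $t$ since $h\neq0$; thus every orbit meets the plane $\{x=0\}$ exactly once, and this plane is $L^\perp$, the plane orthogonal to $L$. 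For the \emph{lightlike} axis, a direct computation from (\ref{motion3}) shows that the third coordinate minus the first is sent to $(c-a)+2ht$, again injective in $t$; hence each orbit crosses $\{z=x\}$ exactly once. Since $\{z=x\}=L^\perp$ and a null vector lies in its own orthogonal complement, this is precisely the unique plane containing $L$ on which the induced metric is degenerate, so $\gamma:=S\cap\{z=x\}$ lies in the degenerate plane through $L$.

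The step I expect to require the most care is the lightlike case: the motion (\ref{motion3}) is cubic in $t$, so it is not transparent a priori which linear functional linearizes the flow, and one must compute $z-x$ explicitly and check that the quadratic (and cubic) terms cancel, leaving $(c-a)+2ht$. A secondary point to treat with rigor is that $S\cap P$ is genuinely a regular curve rather than a singular locus; this follows from transversality of $S$ to $P$, which holds because the infinitesimal generator $\xi$ of $G_{L,h}$ is tangent to $S$ and, away from the axis and off a discrete set of times, transverse to $P$, so that $S$ crosses $P$ cleanly. With these two verifications in place, the construction $\gamma=S\cap P$ delivers the generating curve together with all three planarity assertions.
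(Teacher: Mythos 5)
Your proposal is correct, but note that the paper itself offers no proof of this proposition: it simply cites Lemma 2.1 of Mira--Pastor \cite{mp}. Your argument therefore supplies a self-contained proof of a statement the paper defers to the literature, and it is the natural one: find a plane $P$ met by every orbit, set $\gamma=S\cap P$, and get the planarity claims for free. Your key computations check out. In the timelike case the angle sweeps all of $\r$ along each orbit off $L$, so any plane containing $L$ works, matching item~1. In the spacelike case the first coordinate of (\ref{motion2}) is $a+ht$, injective since $h\neq 0$, and $\{x=0\}=L^{\perp}$. In the lightlike case (the paper's item~3, where ``spacelike'' is a typo for ``lightlike''), applying (\ref{motion3}) to $(a,b,c)$ gives third-minus-first coordinate equal to $(c-a)+2ht$ --- the quadratic and cubic terms do cancel exactly as you anticipated --- and $\{z=x\}=L^{\perp}$ is indeed the unique degenerate plane through $L$. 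Your transversality remark is also sound: for the spacelike and lightlike axes the generator is transverse to $P$ at every point (its $x$-component, resp.\ $(z-x)$-component, is the nonzero constant $h$, resp.\ $2h$), and in the timelike case it is transverse away from the axis. Two small caveats worth stating explicitly if you write this up: $S\cap P$ need not be connected, so either the conclusion should be read locally (as the paper in fact uses it, via the local parametrizations (\ref{p-1})--(\ref{p-3})) or $\gamma$ should be allowed finitely many components; and in the timelike case regularity of $S\cap P$ can fail at points of $S$ lying on the axis itself, so those points must be excluded or handled separately.
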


Thus, by Propositions \ref{pr1} and \ref{pr2},   any helicoidal surface in $\e_1^3$ locally parametrizes  as:
\begin{enumerate}
\item If the axis is timelike, with $L=<(0,0,1)>$, and $\gamma(s)=(s,0,f(s))$, then
\begin{equation}\label{p-1}
X(s,t)=(s\cos{(t)},s\sin{(t)},ht+f(s)),\ s\in I, t\in\r.\end{equation}
\item If the axis is spacelike,  with $L=<(1,0,0)>$,  and $\gamma(s)=(0,s,f(s))$, then
\begin{equation}\label{p-2}
 X(s,t)=(ht,s\cosh{(t)}+f(s)\sinh{(t)},s\sinh{(t)}+f(s)\cosh{(t)}),\ s\in I, t\in\r.
 \end{equation}
\item If the axis is lightlike, with $L=<(1,0,1)>$,  and $\gamma(s)=(f(s),s,f(s))$, then
\begin{equation}\label{p-3}
X(s,t)=(st+ h(\frac{t^3}{t}-t)+f(s),s+ht^2,st+h(\frac{t^3}{3}+t)+f(s)).\end{equation}
 \end{enumerate}

\begin{remark} In \cite{dk} the authors define a ruled helicoidal surface as a helicoidal surface generating by a straight-line. It is evident that  any  ruled helicoidal surface is both a ruled surface and a helicoidal surface. However, there are ruled surfaces that are helicoidal but they are not generated by a straight-line in the sense of Proposition \ref{pr2}. For example,  the timelike hyperbolic cylinder  $y^2-z^2=r^2$ is helicoidal whose axis is $L=<(1,0,0)>$, and   it is also a ruled surface, but the intersection of the surface with the plane $x=0$ is $\gamma(s)=(0,r\cosh{(s)},r\sinh{(s)})$, which it is not a  straight-line. In fact, the surface is invariant for all helicoidal motions with axis $L$ and arbitrary pitch $h$ \cite{mp}. On the other hand, this surface can been viewed as a  surface of revolution with axis $L$ obtained rotating the curve  $\alpha(s)=(s,0,r)$. This curve $\alpha$ is not a generating curve according to Proposition \ref{pr2}.
\end{remark}

Finally we end with the definition of a   circle in $\e_1^3$(see \cite{lls}).

\begin{definition} A Lorentzian circle in $\e_1^3$ if the orbit of a point under a group of rotations.
\end{definition}

Let $p=(a,b,c) $ be a point of $\e_1^3$ and let $G_L=\{\phi_t;t\in\r\}$ a group of rotations with axis $L$. We are going to describe the trajectory of $p$ by $G_L$, that is, $\alpha(t)=\phi_t(p)$, $t\in\r$. We assume that $p\not\in L$ because in such case $\alpha$ is a point. Depending on the   causal character of $L$, we have three cases.

\begin{enumerate}
\item The axis is timelike,  $L=<(0,0,1)>$.   Then
    $\alpha(t)=(a \cos{(t)}-b\sin{(t)},b\cos{(t)}+a\sin{(t)},c)$. This curve is an Euclidean circle of radius $\sqrt{a^2+b^2}$ contained in the plane $z=c$.
\item The axis is spacelike, $L=<(1,0,0)>$.  Now $\alpha(t)=(a,b\cosh{(t)}+c\sinh{(t)},c\cosh{(t)}+b\sinh{(t)})$ with $|\alpha'(t)|^2=-b^2+c^2$. Depending on the relation between $b$ and $c$, we distinguish three sub-cases:
    \begin{enumerate}
    \item If $b^2<c^2$, $\alpha$ is spacelike and it intersects the $z$-axis in one point. After a translation, we assume that $p=(0,0,c)$. Then $\alpha(t)=(0,c\sinh{(t)},c\cosh{(t)})$. This curve is the hyperbola $z^2-y^2=c^2$ in the plane $x=0$.
    \item If $b^2=c^2$, then $\alpha$ is lightlike, $\alpha(t)=(a,\pm c(\cosh{(t)}+\sinh{(t)}),c(\cosh{(t)}+\sinh{(t)})$. Thus $\alpha$ is one of the straight-lines $y=\pm z$ in the plane $x=a$.
    \item If $b^2>c^2$, $\alpha$ is timelike and it intersects  the $y$-axis in one point. Now we suppose that $p=(0,b,0)$ and so $\alpha(t)=(0,b\cosh{(t)},b\sinh{(t)})$. This curve is the hyperbola $y^2-z^2=b^2$ in the plane $x=0$.
    \end{enumerate}
\item The axis is lightlike, $L=<(1,0,1)>$ and that $p=(a,0,c)$.  Because $|\alpha'(t)|^2=(a-c)^2$ and $p\not\in L$, then $\alpha$ is a spacelike curve:   $\alpha(t)=(a,0,c)+(c-a)t (0,1,0)+(c-a)/2 t^2(1,0,1)$. This curve lies in the plane $x-z=a-c$ and from the Euclidean viewpoint, this curve is a parabola with axis parallel to $(1,0,1)$.
\end{enumerate}

%%%%%%%%%%%%%%%%%%%%%%%%%%%%%%%%%%%%%%%%5
\section{Curvature of a non-degenerate surface}\label{sectc}
%%%%%%%%%%%%%%%%%%%%%%%%%%%%%%%%%%%%%%%%

Part of this section can seen in   \cite{on,we}. An immersion $x:M\rightarrow \e_1^3$  of a surface $M$ is called spacelike (resp. timelike) if the tangent plane $T_pM$  is spacelike (resp. timelike) for all $p\in M$.
We also say that $M$ is spacelike (resp, timelike). In both cases, we say that the surface is non-degenerate.
We define the mean curvature $H$ and the Gauss curvature $K$  of a non-degenerate surface. For this, let $\mathfrak{X}(M)$ be the set of tangent vector fields to $M$.  We denote by  $\nabla^0$ the Levi-Civitta connection of $\e_1^3$ and $\nabla$ the induced connection on $M$ by the immersion $x$, that is, $\nabla_X Y=(\nabla_X^0 Y)^\top$, where $\top$ denotes the tangent part of the vector field $\nabla^0_X Y$. We have the decomposition
\begin{equation}\label{3-gf}
\nabla_X^0 Y=\nabla_X Y+\sigma(X,Y),
\end{equation}
called the Gauss formula. Here $\sigma(X,Y)$ is the normal part of the vector $\nabla^0_XY$.
Now consider $\xi$ a normal vector field to  $x$ and we do $-\nabla_X^0 \xi$. We denote by $A_\xi (X)$ its tangent component, that is, $A_\xi(X)=-(\nabla_X^0 \xi)^\top$. We have from (\ref{3-gf}) that
\begin{equation}\label{3-symm}
\langle A_\xi(X),Y\rangle=\langle\sigma(X,Y),\xi\rangle.
\end{equation}
The map  $A_\xi:\mathfrak{X}(M)\rightarrow\mathfrak{X}(M)$ is called the Weingarten endomorphism associated to $\xi$. Because $\sigma$ is symmetric, we have from (\ref{3-symm}) that
\begin{equation}\label{3-self}
\langle A_\xi(X),Y\rangle=\langle X,A_\xi(Y)\rangle.
\end{equation}
This means that the map $A_\xi$  is linear and self-adjoint with respect to the metric of $M$.
From now, we assume that the surface is orientable (in fact, any spacelike  surface is  orientable). In the case that the surface is timelike, we assume that it is locally orientable. Denote by $N$ a Gauss map   on $M$. Define
$\epsilon$ by $\langle N,N\rangle=\epsilon$, where $\epsilon=-1$ (resp. $1)$ if the immersion is spacelike (resp. timelike). If we take   $\xi=N$, and because  $\langle N,N\rangle=\epsilon$,
we have $\langle\nabla_X^0 N,N\rangle=0$. Then the normal part of $\nabla_XN$ vanishes and we obtain the Weingarten formula
\begin{equation}\label{3-dn}
-\nabla_X^0 N=A_N(X).
\end{equation}

\begin{definition}
The Weingarten endomorphism at $p\in M$ is defined by  $A_p:T_pM\rightarrow T_pM$, $A_p=A_{N(p)}$, that is,
if $v\in T_pM$, let $X\in\mathfrak{X}(M)$ be a tangent vector field that extends $v$, then $A_p(v)=(A(X))_p$.
 Moreover, from (\ref{3-dn})
 $$A_p(v)=-(dN)_p(v),\ \ v\in T_pM,$$
 where $(dN)_p$ is the usual differentiation in $\e_1^3$ of the map $N$ at $p$.
 \end{definition}

 Because $\sigma(X,Y)$ is proportional to $N$, we have from  (\ref{3-gf}) and (\ref{3-symm}) that
\begin{equation}\label{3-ga2}
\sigma(X,Y)=\epsilon\langle\sigma(X,Y),N\rangle N=\epsilon\langle A(X),Y\rangle N.
\end{equation}
Now (\ref{3-gf}) writes as $\nabla_X^0 Y=\nabla_X Y+\epsilon\langle A(X),Y\rangle N$.

\begin{definition}
Given a non-degenerate immersion,  the  mean curvature vector field $\vec{H}$ and the Gauss curvature $K$ are
$$\vec{H}=\frac12\mbox{trace}_I(\sigma),\hspace*{1cm}K=\epsilon\frac{\mbox{det}(\sigma)}{\mbox{det}(I)},$$
where the subscript $\mbox{I}$ means that the computation is done with respect to the metric $\mbox{I}=\langle,\rangle$. The mean curvature function $H$ is given by $\vec{H}=HN$, that is,  $H=\epsilon \langle\vec{H},N\rangle$.
\end{definition}
In terms of the Weingarten endomorphism $A$,  $H$ and $K$ are
$$H=\frac{\epsilon}{2}\mbox{trace}(A),\hspace*{1cm}K=\epsilon\mbox{det}(A).$$

In this work we need to compute $H$ and $K$  using a parametrization of the surface. Let  $X:U\subset\r^2\rightarrow\e_1^3$ be a parametrization of the surface, $X=X(u,v)$.
Denote $\mbox{II}(w_1,w_2)=\langle Aw_1,w_2\rangle$, with $w_i\in T_{X(u,v)}M$. Then $A=\mbox{(II)}(\mbox{I})^{-1}$. Fix the basis $B$ of the tangent plane given by
$$X_u:=\frac{\partial X(u,v)}{\partial u},\hspace*{1cm}X_v:=\frac{\partial X(u,v)}{\partial v}.$$
We define $\{E,F,G\}$ and $\{e,f,g\}$   the coefficients of $\mbox{I}$ and $\mbox{II}$ with respect to $B$, respectively:
$$E=\langle X_u,X_u\rangle,\ F=\langle X_u,X_v\rangle,\ G=\langle X_v,x_v\rangle,$$
$$e=-\langle N_u,X_{u}\rangle,\ f=-\langle N_u,X_{v}\rangle,\ g=-\langle N_v,X_{v}\rangle,$$
Then
\begin{equation}\label{hk}
H=\epsilon\frac12\frac{eG-2fF+gE}{EG-F^2},\hspace*{1cm}K=\epsilon\frac{eg-f^2}{EG-F^2}.\end{equation}
Here $N$ is
$$N=\frac{X_u\times X_v}{\sqrt{-\epsilon(EG-F^2)}}.$$
 We recall that $W:=EG-F^2$ is positive (resp. negative) if the immersion is spacelike (resp. timelike).
Finally, in order to do the computations for $H$ and $K$, we recall that the cross-product $\times$ satisfies that
for any vectors $u,v,w\in\e_1^3$, $\langle u\times v,w\rangle=\mbox{det}(u,v,w)$. Then
(\ref{hk}) writes as
\begin{eqnarray}
H&=&-\frac{1}{2}\frac{G\mbox{det}(X_u,X_v,X_{uu})-
2F\mbox{det}(X_u,X_v,X_{uv})+E\mbox{det}(X_u,X_v,X_{vv})}{(-\epsilon(EG-F^2))^{3/2}}\nonumber \\
&:=&-\frac12\frac{H_1}{(-\epsilon(EG-F^2))^{3/2}}\label{h1}\\
K&=&-\frac{\mbox{det}(X_u,X_v,X_{uu})\mbox{det}(X_u,X_v,X_{vv})-
\mbox{det}(X_u,X_v,X_{uv})^2}{ (EG-F^2)^2}\nonumber \\
&:=&-\frac{K_1}{ (EG-F^2)^2}.\label{hk22}
\end{eqnarray}
From (\ref{h1}),  we have
\begin{equation}\label{h2}
4H^2 |EG-F^2|^3-H_1^2=0.\end{equation}

%%%%%%%%%%%%%%%%%%%%%%%%%%%%%%%%%%%%%%
\section{Proof of Theorem \ref{t1}}\label{sectt1}
%%%%%%%%%%%%%%%%%%%%%%%%%%%%%%%%%%%%%%%
We consider a helicoidal surface generated by   the graph of the polynomial $f(s)=\sum_{n=0}^m a_n s^n$, with $a_m\not=0$. We distinguish cases according to the causal character of the axis.

\subsection{The axis is timelike }

  Assume that $L=<(0,0,1)>$. By Proposition \ref{pr2}, we suppose that the generating curve
  $\gamma$ is contained in the plane $y=0$. If $\gamma$ is not locally a graph on the $x$-axis, then
   $\gamma$ is the vertical line $\gamma(s)=(r,0,s)$, whose corresponding helicoidal surface is the
   Lorentzian cylinder $x^2+y^2=r^2$. This surface has $|H|=1/(2r)$. Thus we assume that $\gamma$  is given by $\gamma(s)=(s,0,f(s))$. The mean curvature is
$$H=-\frac12\frac{s^2f'(1-f'^2)+s(s^2-h^2)f''-2h^2f'}{(-h^2+s^2(1-f'^2))^{3/2}}.$$
We separate the cases that $H$ is zero or not zero.

If $H=0$, the numerator of the above equation  is a polynomial on $s$. Then all coefficients must vanish. If $m\geq 2$, the leader coefficient corresponds to $s^2f'^3$, that is, to $s^{3m-1}$. This coefficient is  $-m^3a_m^3$ and this implies $a_m=0$: contradiction. Thus $m<2$. If $m=0$, then $f(s)=a_0$ and  $H=0$.
 If $m=1$, the leader coefficient is $a_1(1-a_1^2)=0$. Thus $a_1=\pm 1$.  Now $H_1=\pm 2h^2$: contradiction.

Assume $H\not=0$. From (\ref{h2}) and if $m\geq 2$, the leader coefficient corresponds to $s^6 f'^6$, that is, for $s^{6m}$. The corresponding coefficient is $4H^2m^6 a_m^6$: contradiction. If $m=1$, (\ref{h2}) is a polynomial of degree $6$, with leader coefficient $4H^2(1-a_1^2)^3$. Thus $a_1=\pm 1$. With this value of $a_1$, $W=-h^2$. Now (\ref{h2}) is $4h^4(-1+h^2 H^2)$, obtaining that $|H|=1/h$.

As conclusion, we obtain  $f$ is a constant function and $H=0$ or $f(s)=\pm s+a_0$ and $H\not=0$. In the first case,
the surface $X(s,t)=(s\cos{(t)},s\sin{(t)},ht+a_0)$, which it is the helicoid of first kind followed with a translation in the direction of the axis $L$.

\subsection{The axis is spacelike }
Consider that the axis is $L=<(1,0,0)>$ and that the generating curve $\gamma$ is contained in the plane $x=0$ (Proposition \ref{pr2}). As in the timelike case, if $\gamma$ is not locally a graph on the $y$-axis, then
$\gamma(s)=(0,b,s)$. The helicoidal surface has constant mean curvature if $b=0$, with $H=0$. The surface is the helicoid of third kind. Assume  $\gamma(s)=(0,s,f(s))$. Now the mean curvature is
$$H=-\frac12\frac{h(-f+sf')(-1+f'^2)-h(h^2-s^2+f^2)f''}{(-\epsilon(h^2-s^2+2sff'-(h^2+f^2)f'^2))^{3/2}}.$$
First, let us assume that $H=0$. The numerator of this expression of $H$ vanishes for any $s$. It is  a polynomial   whose leader coefficient corresponds to $s^{3m-2}$. This coefficient  is $ha_m^3m(m-1)^2$. Thus, if $m\geq 2$, we obtain $a_m=0$, which it is a contradiction. Thus, $m\leq 1$.  If $m=0$, then $H_1=h a_0$. This means that $a_0=0$. Suppose now that
$m=1$. In such case $H=0$ is equivalent to $ha_0(1-a_1)^2=0$. We conclude that
$a_0=0$ or $a_1=\pm 1$. If $a_1=\pm 1$, then $W=-a_0^2$, and thus, $a_0\not=0$.

We suppose that $H$ is a constant with $H\not=0$. The polynomial on $s$ given by (\ref{h2}) has as leader coefficient $4H^2m^6a_m^{12}$ if $m\geq 2$, which corresponds with $s^{12m-6}$. Then $a_m=0$: contradiction. If $m=1$,
(\ref{h2}) is a polynomial of degree $6$. The corresponding coefficient is $4H^2(a_1^2-1)^6$. Then
$a_1=\pm 1$. But we know that $H=0$: contradiction.

The conclusion is  that $H=0$ and the degree of $f$ is $0$ or $1$. Exactly, the only cases are   $f(s)=a_1 s$ or $f(s)=\pm s+a_0$, with $a_0,a_1\in\r$, $a_0\not=0$. In the first case, we distinguish three possibilities:
\begin{enumerate}
\item If $|a_1|<1$, the surface is a rigid motion of the helicoid of second kind. In fact, let $\theta$ such that
$a_1=\sin\theta/\cos\theta$ and define $\alpha(s)=(0,s/\cosh\theta,0)$. We know that $G_{L,h}(\alpha)$ is the helicoid of second kind. On the other hand, we have
$$\phi_t(\gamma(s))=\phi_{t+\theta}(\alpha(s))-(h\theta,0,0)=G_{L,h}(\alpha)-(h\theta,0,0),$$
that is, our surface is a translation of the helicoid of second kind in the direction of the axis $L$.
 \item If $|a_1|=1$, then $W=0$, and this is not possible.
 \item The case $|a_1|>1$ is analogous with the case $|a_1|<0$. The surface is  the helicoid of third kind followed of a translation in the direction of the axis.
 \end{enumerate}

\subsection{The axis is lightlike }

By Proposition \ref{pr2}, we consider that $\gamma$ lies in the plane $<(1,0,1),(0,1,0)>$. If $\gamma$ is not a graph on the $y$-axis, then $\gamma(s)=(s,b,s)$ and the corresponding surface is the parabolic null cylinder with $H=0$.
Assume now $\gamma(s)=(f(s),s,f(s))$.   The mean curvature is
 $$H=-\frac12 \frac{4h^2(f'-2sf'')}{(4h \epsilon (s+hf'^2))^{3/2}}.$$
We assume that $H=0$. Then the numerator is a polynomial on $s$ that must be zero. The degree of this polynomial equation is $m-1$ if $m\geq 1$. The  leader coefficient is  $-4h^2 ma_m(2m-3)$. Then $a_m=0$, which it is a contradiction. If $m=0$, then   $f(s)$ is constant, $f(s)=a_0$, and $H=0$.

Assume that $H$ is a non-zero constant. The polynomial equation  (\ref{h2}) is of degree  $6m-6$ if $m\geq 2$. The coefficient is $-256H^2 h^6 m^6 a_m^6$. Thus $a_m=0$: contradiction. If $m=1$, then (\ref{h2}) is a polynomial equation of degree $3$, whose leader coefficient is $256 h^3 H^2$:  contradiction. Finally, the case $m=0$ leads to $H=0$.

For the lightlike case, the only possibility is that $H=0$ and $f$ is a constant function, $f(s)=a_0$. The surface parametrizes as
$$X(s,t)=(st+ h(\frac{t^3}{t}-t),s+ht^2,st+h(\frac{t^3}{3}+t))+(a_0,0,a_0), \ a_0\in\r,$$
that is, it a translation of the Cayley's surface. This finishes the proof of Theorem \ref{t1}.

We end this section with a result about a result on non-umbilical timelike surfaces that satisfy $H^2-K=0$. Recall that for a non-degenerate surface, we have $H^2-\epsilon K\geq 0$. In the case that the surface is spacelike ($\epsilon=-1$), the Weingarten map $A_p$ is diagonalizable and the equality $H(p)^2+ K(p)= 0$ means that $p$ is umbilical: $A_p=\lambda(p)(\mbox{id})$. However, if the surface is timelike ($\epsilon=1$), $A_p$ could be not diagonalizable. In fact, it could be that $H(p)^2- K(p)= 0$ and $p$ is not umbilical. From Theorem \ref{t1}, the surface with timelike axis generated by $\gamma(s)=(s,0,\pm s+a_0)$ is a surface with $|H|=1/h$ and $K=1/h^2$. Then $H^2-K=0$. On the other hand, the surface with spacelike axis generated by $\gamma(s)=(0,s,\pm s+a_0)$ satisfies
$H=K=0$. In this sense, we show:

\begin{theorem} \label{t4}  Consider a  helicoidal timelike surface in $\e_1^3$   whose generating curve is the graph of a polynomial $f(s)=\sum_{n=0}^m a_n s^n$. If $H^2-K=0$ on the surface, then $m\leq 1$.  Moreover, up a rigid motion of $\e_1^3$, the parametrization is
\begin{enumerate}
\item If the axis is timelike, $X(s,t)=(s\cos{(t)},s\sin{(t)},\pm s+a_0+ht)$. In this case, $a_0\in\r$, $|H|=1/h$ and $K=1/h^2$.
\item If the axis is spacelike,
$$X(s,t)=(ht,(\pm s+a_0)\sinh{(t)}+s\cosh{(t)},(\pm s+a_0)\cosh{(t)}+s\sinh{(t)}).$$
Here $a_0\not=0$ and $H=K=0$.
\item If the axis is lightlike, then the surface is the parabolic null cylinder
$$X(s,t)=(s+bt+h(-t+\frac{t^2}{2}),b+ht^2,s+bt+h(t+\frac{t^2}{3})).$$
Here $b\in\r$ and $H=K=0$.
\end{enumerate}
\end{theorem}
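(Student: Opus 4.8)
The plan is to re-run the scheme of the proof of Theorem~\ref{t1}, replacing the hypothesis that $H$ be constant by the single pointwise condition $H^2-K=0$. Since the surface is timelike, $\epsilon=1$ and $W:=EG-F^2<0$, so by (\ref{h1}) and (\ref{hk22}),
$$H^2=\frac{H_1^2}{4(-W)^3},\qquad K=-\frac{K_1}{W^2},\qquad\text{hence}\qquad H^2-K=\frac{4K_1W-H_1^2}{4W^3}.$$
Thus $H^2-K=0$ is equivalent to the identity $H_1^2=4K_1W$. Because $f$ is a polynomial and the surface is invariant under $G_{L,h}$, the functions $H_1$, $K_1$ and $W$ are polynomials in $s$ alone, so this identity must hold coefficientwise; the whole argument then becomes a degree count in $s$.

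First I would dispose of the generating curves that are not graphs on the relevant coordinate axis, exactly as in Theorem~\ref{t1}. For the timelike axis the only such curve gives the Lorentzian cylinder $x^2+y^2=r^2$, which has $K=0$ and $|H|=1/(2r)$, so $H^2-K=1/(4r^2)\neq0$ and it is discarded; the spacelike exceptional curve $\gamma(s)=(0,b,s)$ produces $H^2-K=h^2(b^2-4h^2-4s^2)/(4(h^2+s^2)^3)\not\equiv0$ and is likewise discarded; but the lightlike curve $\gamma(s)=(s,b,s)$ gives the parabolic null cylinder, where $E=0$ and $X_{ss}=X_{st}=0$ force $H=K=0$, so it is kept. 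For the graph curves I would then record the data entering the identity: for the timelike axis $W=s^2(1-f'^2)-h^2$ and $K_1=s^3f'f''-h^2$; for the spacelike axis $W=h^2-s^2+2sff'-(h^2+f^2)f'^2$ and $K_1=-h^2(f''(sf'-f)+(1-f'^2)^2)$; for the lightlike axis $W=-4h(s+hf'^2)$ and $K_1=-8h^3f'f''-4h^2$; in each case $H_1$ is the numerator already displayed in the proof of Theorem~\ref{t1}.

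The heart of the argument, and the step I expect to be the main obstacle, is the leading-coefficient analysis of $P(s):=H_1^2-4K_1W$ for $m\geq2$. The subtlety is that, unlike the $H=0$ computation, the dominant term is not always $H_1^2$: comparing degrees in $s$ one finds that for the timelike axis $H_1^2$ dominates, of degree $6m-2$ with top coefficient $m^6a_m^6$; for the spacelike axis $4K_1W$ dominates, of degree $8m-6$ with top coefficient $-4h^2m^6a_m^8$; and for the lightlike axis $4K_1W$ again dominates, of degree $4m-5$ with top coefficient $-128h^5m^4(m-1)a_m^4$. In every case the leading coefficient is a nonzero constant times a positive power of $a_m$, hence cannot vanish when $m\geq2$; this forces $a_m=0$, contradicting $a_m\neq0$, and gives $m\leq1$. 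The point requiring care is to exclude an accidental cancellation of the two top terms, which is why I would keep the exact coefficients rather than only the degrees.

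It then remains to solve $H_1^2=4K_1W$ for $m\in\{0,1\}$ by direct comparison of coefficients. For the timelike axis the orders $s^0,s^2,s^4$ force $a_1^2=1$ (the constant case $m=0$ being impossible, since it demands $0=-4h^2$ at order $s^2$); with $f=\pm s+a_0$ one gets $W=-h^2$, $|H|=1/h$ and $K=1/h^2$, which is case~1. For the spacelike axis the same comparison again forces $a_1^2=1$ together with $a_0\neq0$ (so that $W=-a_0^2\neq0$), whence $H_1=0$ and $K_1=0$, i.e. $H=K=0$, which is case~2. For the lightlike axis both $m=0$ and $m=1$ reduce to the impossible identity $64h^3s\equiv0$, so no polynomial graph survives and the only surface is the parabolic null cylinder already isolated, which is case~3. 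Reading off the three parametrizations completes the proof.
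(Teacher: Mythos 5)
Your proposal is correct and follows essentially the same route as the paper: the paper also reduces $H^2-K=0$ to the polynomial identity $H_1^2-4WK_1=0$, rules out the non-graph generating curves, kills $m\geq 2$ by a leading-coefficient count (degree $6m-2$ from $H_1^2$ for the timelike axis, $8m-6$ and $4m-5$ from $4WK_1$ for the spacelike and lightlike axes), and then solves the cases $m\in\{0,1\}$ by comparing coefficients. Your explicit leading coefficients are in fact slightly more careful than the paper's (e.g.\ the spacelike coefficient should involve $a_m^8$, as you have, rather than the paper's $a_m^6$, and your explicit discard of the exceptional curves replaces the paper's unproved assertions), but these are normalization/typo-level differences, not a different method.
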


\begin{proof} From (\ref{h1}) and (\ref{hk22}), and the fact that $W<0$,   the identity  $H^2-K=0$ is equivalent to $H_1^2-4WK_1=0$. Because the generating curve is the graph of a polynomial on $s$, this equation writes as
$P(s)=\sum_{n=0}^kA_n s^n=0$. Then all coefficients must be zero. We distinguish the three cases of causal character of the axis:
\begin{enumerate}
\item The axis is timelike. The Lorentzian cylinder does not satisfy $H^2-K=0$. Thus we assume 
that $\gamma(s)=(s,0,f(s))$. The equation $H_1^2-4WK_1=0$ writes as
$$\Big((-2h^2+s^2)f-s^2f'^3+s(s^2-h^2)f''\Big)^2-4(h^2-s^2+s^2f'^2)(h^2-s^3f'f'')=0.$$
If  $m\geq 2$, the leader coefficient of $P$ comes from $s^4f'^6$, which it is $m^6a_m^6$: contradiction. If $m=1$, the degree of $P$ is $k=4$, and the leader coefficient is $A_4=a_1^2(1-a_1^2)^2$. Hence we obtain $a_1=\pm 1$. In this case, $|H|=1/h$, $K=1/h^2$, $W=-h^2$ and the Weingarten map is
$$\left(\begin{array}{ll}-1&0\\-h&-1\end{array}\right).$$
If $m=0$, the surface is the helicoid of first kind, which it does not satisfy $H^2-K=0$.
\item The axis is spacelike.  If $\gamma$ is not a graph on the $y$-axis, then $\gamma(s)=(0,b,s)$, but the surface that generates does not satisfy $H^2-K=0$. We suppose that $\gamma(s)=(0,s,f(s))$. The equation to study is
\begin{eqnarray*}
& & h\Big(sf'-sf'^3+f(f'^2-1)+(h^2-s^2)f''+f^2f''\Big)^2\\
& &+4\Big((f-sf')^2+(h^2-s^2+f^2)(-1+f'^2)\Big)\Big(
hf''(f-sf')-h(f'^2-1)^2\Big)=0.
\end{eqnarray*}
  If $m\geq 2$, the polynomial equation is of degree $k=8m-6$ and the leader coefficient is $-4h m^6 a_m^6$. This gives a contradiction. Assume $m=1$. The degree is now $k=2$ and the leader coefficient is $A_2=4h^2(1-a_1^2)^4$. Then $a_1=\pm 1$. The surface satisfies $H=K=0$ and the Weingarten map is
$$\left(\begin{array}{ll}0&0\\-h&0\end{array}\right).$$
In the case that $m=0$, the equation $P=0$ reduces to $-4s^2+4h^2+a_0^2=0$, which it gives a contradiction.
\item The axis is lightlike. We point out that if $\gamma(s)=(s,b,s)$ the surface is the parabolic null cylinder with $H=K=0$. We assume that $\gamma(s)=(f(s),s,f(s))$. Now $H_1^2-4WK_1=0$ reduces to
$$ (h(f'-2sf'' )^2-4(s+hf'^2)(1+2hf'f'')=0.$$
When $m\geq 2$, the  degree of $P$ is $k=4m-5$ and it comes from $-8h^2f'^3f''$. The leader coefficient is $-8h^2 m^4(m-1)a_m^4$: contradiction. If $m=1$, the equation reduces
to $3h a_1^2+4s=0$, which it leads to a contradiction again. If $m=0$, the equation is $hs=0$: contradiction.
\end{enumerate}
\end{proof}

\begin{remark} The helicoidal surfaces that appear in Theorem \ref{t4} are generated by lightlike straight-lines. Both surfaces are ruled and Theorem 2 in \cite{dk} asserts that if a ruling is lightlike, then $H^2=K$, as it occurs in our situation.
\end{remark}

\begin{remark} The minimal timelike surface $X(s,t)=(s\cos{(t)},s\sin({t}),\pm s+a_0 +ht)$  is different from the three helicoids and the Cayley's  surface. For the choice of $a_0=0$, this surface appears in \cite[Ex. 5.3]{hj}. On the other hand, the two  surfaces that appear in Theorem \ref{t4} are {\it linear} Weingarten surfaces, that is,
they satisfy  a relation of type $aH+bK=c$, with $a,b,c\in\r$.
\end{remark}

\begin{figure}[h]\begin{center}
\includegraphics[width=5cm]{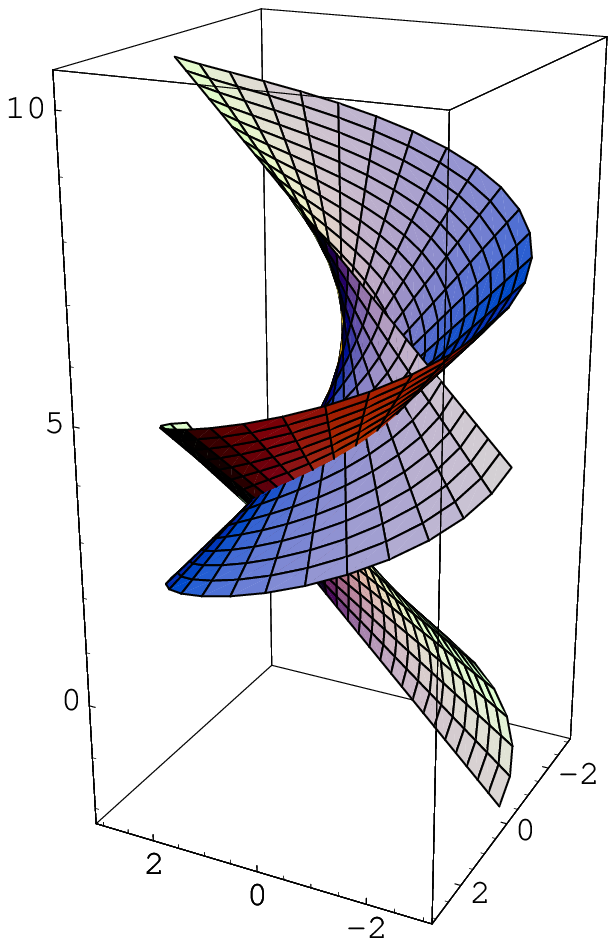}\hspace*{.5cm}\includegraphics[width=4cm]{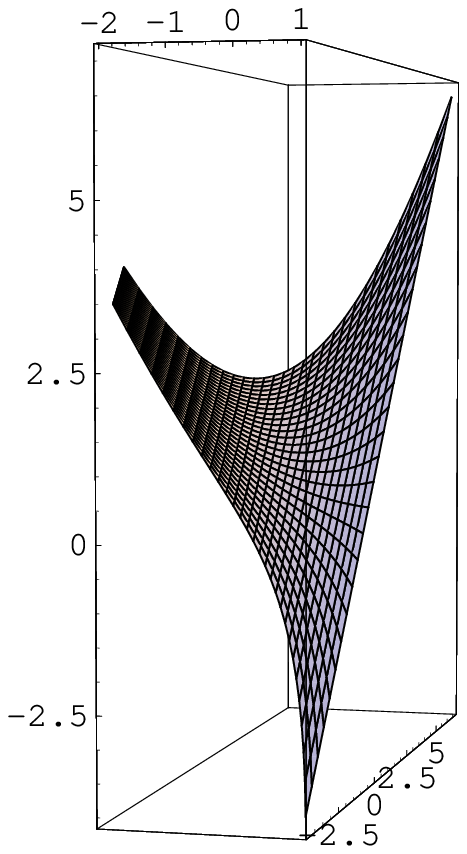}\end{center}
\caption{(left) The timelike $X(s,t)=(s\cos{(t)},s\sin({t}),\pm s+a_0 +ht)$, for $a_0=h=1$. (right) The surface
$X(s,t)=(st+ h(\frac{t^3}{t}-t)+a_0,s+ht^2,st+h(\frac{t^3}{3}+t)+a_0))$ for $a_0=h=1$.}
\end{figure}

%%%%%%%%%%%%%%%%%%%%%%%%%%%%%%%%%%%%%%
\section{Proof of Theorem \ref{t2}}\label{sectt2}
%%%%%%%%%%%%%%%%%%%%%%%%%%%%%%%%%%%%%%%

We consider helicoidal surfaces generated by a Lorentzian circle. We distinguish the three cases of causal character of the  axis.

\subsection{The axis is timelike }

Consider the axis $L=<(0,0,1)>$ and the generating curve $\gamma(s)=(x(s),0,z(s))$. Here the circle $\gamma$ lies in the timelike plane $\Pi:\{y=0\}$. Then the parametrization of $\gamma$ is, up a rigid motion of $\Pi$, the curve
$x^2-z^2=\pm r^2$. We take the first possibility, that is, the circle $x^2-z^2=r^2$, because the other one is analogous, namely, $x^2-z^2=-r^2$,  and we left to the reader.  Thus
\begin{eqnarray*}
\gamma(s)&=&\left(\begin{array}{lll}
\cosh{(\theta)}&0&\sinh{(\theta)}\\0&1&0\\\sinh{(\theta)}&0&\cosh{(\theta)}\end{array}\right)
\left(\begin{array}{l}r\cosh{(s)}\\ 0\\ r\sinh{(s)}\end{array}\right)+\left(\begin{array}{l}\lambda\\ 0\\\mu\end{array}\right)\\
&=&(\lambda+r\cosh{(s+\theta)},0,\mu+r\sinh{(s+\theta)}),
\end{eqnarray*}
with $\theta,\lambda,\mu\in\r$. The parametrization of the surface is
$$X(s,t)=(\cos{(t)}(\lambda+r\cosh{(s+\theta)}),\sin{(t)}(\mu+r\cosh{(s+\theta)},\mu+ht+r\sinh{(s+\theta)}).$$
We compute the mean curvature and we separate the cases $H=0$ and $H\not=0$.
\begin{enumerate}
\item If $H=0$, then $H_1=0$. This equation writes
$$\sum_{n=0}^3 A_n\cosh{(n(s+\theta))}=0.$$
Because the functions $\{ \cosh{(n(s+\theta))}\}$ are independent linear, $A_n=0$, for any $n$, $0\leq n\leq 3$, we conclude $A_n=0$. The leader coefficient is $A_3=\frac12r^3(h^2+r^2)$. We obtain then a contradiction.
\item Assume that $H$ is a non-zero constant. We use (\ref{h2}), which writes as
$$\sum_{n=0}^6 A_n\cosh{(n(s+\theta))}=0.$$
Now, 
$$A_6=-\frac18r^6(h^2+r^2)^2(\pm 1+H^2(h^2+r^2)),$$
where $\pm 1+H^2(h^2+r^2)$ depends if the surface is spacelike or timelike. If the choice is $1+H^2(h^2+r^2)$, we get a contradiction. In the case $-1+H^2(h^2+r^2)$, then $H^2=1/(h^2+r^2)$ and $A_5=\lambda r^7(h^2+r^2)/4$. Then
$\lambda=0$. But now $A_2=3h^4 r^6/2$: contradiction.
\end{enumerate}
As conclusion,  the case that the axis is timelike is impossible.

\subsection{The axis is spacelike}

Now  the axis is $L=<(1,0,0)>$ and the generating curve $\gamma(s)=(0,y(s),z(s))$ lies in the plane $\Pi:\{x=0\}$. As in the above case, the plane $\Pi$ is timelike and thus, the Lorentzian circles are rigid motions of the
circle $y^2-z^2=\pm r^2$. As in the case that the axis is timelike, we only consider the case $y^2-z^2=r^2$. Then the generating curve $\gamma$ writes as
\begin{eqnarray*}
\gamma(s)&=&\left(\begin{array}{lll}
1&0&0\\
0&\cosh{(\theta)}&\sinh{(\theta)}\\0&\sinh{(\theta)}&\cosh{(\theta)}\end{array}\right)
\left(\begin{array}{l}0\\ r\cosh{(s)}\\ r\sinh{(s)}\end{array}\right)+\left(\begin{array}{l}0\\ \lambda\\ \mu\end{array}\right)\\
&=&(0,\lambda+r\cosh{(s+\theta)},\mu+r\sinh{(s+\theta)}),
\end{eqnarray*}
with $\theta,\lambda,\mu\in\r$. The parametrization of the surface is now
$$X(s,t)=(ht,\lambda\cosh{(t)}+\mu\sinh{(t)}+r\cosh{(s+t+\theta)}),\lambda\sinh{(t)}+\mu\cosh{(t)}+r \sinh{(s+t+\theta)}).$$
\begin{enumerate}
\item Assume $H=0$. This equation writes as
$$hr^2\Big(-\lambda^2+\mu^2+h^2-r \lambda\cosh{(s+\theta)}+r \mu\sinh{(s+\theta)}\Big)=0.$$
As the functions $\cosh{(s+\theta)}$ and $\sinh{(s+\theta)}$ are independent, we deduce that their coefficients vanish, that is, $\lambda=\mu=0$. Now $H=0$ is equivalent to $h^3r^2=0$: contradiction.
\item If $H$ is a non-zero constant, we use (\ref{h2}). The expression of this equation is
$$\sum_{n=0}^6\Big( A_n\cosh{(n(s+\theta))}+B_n\sinh{(n(s+\theta))}\Big)=0.$$
Thus $A_n=B_n=0$. In particular, and independently if the surface is spacelike or timelike, 
$$A_6=-\frac18(\lambda^2+\mu^2)(\lambda^4+14\lambda^2\mu^2+\mu^4)H^2 r^6=0.$$
Therefore, $\lambda=\mu=0$. Now $P=0$ is $h^6r^4(-1+4H^2r^2)=0$, that is,
$|H|=1/(2r)$. The generating curve is $\gamma(s)=(0,r\cosh{(s+\theta)},r\sinh{(s+\theta)})$, that is,
the circle $y^2-z^2=r^2$ in the plane $\Pi$.
\end{enumerate}
We conclude that if the axis is spacelike, then the generating curve is a Lorentzian circle centered at the axis.

\subsection{The axis is lightlike}

Consider the axis $L=<(1,0,1)>$. A helicoidal surface with axis $L$ has the generating curve in the plane
 $\Pi:x-z=0$. A Lorentzian circle in the plane $\Pi$ is a rigid motion of the circle $s\longmapsto c s(0,1,0)+c/2 s^2(1,0,1)$, $c\not=0$. Then the circle writes as
\begin{eqnarray*}
\gamma(s)&=&\left(\begin{array}{lll}
1-\frac{\theta^2}{2}&\theta &\frac{\theta^2}{2}\\
-\theta &1&\theta\\
-\frac{\theta^2}{2} & t &1+\frac{\theta^2}{2}
\end{array}\right)
\left(\begin{array}{l}c\frac{s^2}{2}\\ cs \\ c\frac{s^2}{2}\end{array}\right)+\left(\begin{array}{l}\lambda\\ \mu\\ \lambda\end{array}\right),
\end{eqnarray*}
with $\theta,\lambda,\mu\in\r$.
\begin{enumerate}
\item Suppose $H=0$. This equation writes as $4c^2h^2(-2\mu+c\theta-chs)=0$. Because this is a polynomial equation on $s$, we get a contradiction.
\item If $H\not=0$, then (\ref{h2}) is a polynomial equation of degree $6$. The leader coefficient is $-256c^6 h^6 H^2$. This gives a contradiction again.
\end{enumerate}

Thus it is not possible that the axis is lightlike. This finishes the proof of Theorem \ref{t2}.

%%%%%%%%%%%%%%%%%%%%%%%%%%%%%%%%%%%%%%
\section{Proof of Theorem \ref{t3}}\label{sectt3}
%%%%%%%%%%%%%%%%%%%%%%%%%%%%%%%%%%%%%%%

We do the proof only for the case that the generating curve $\gamma$ is the graph of a polynomial:  when $\gamma$ is a circle, the proof is similar to the given one in Section \ref{sectt2}.

From (\ref{hk22}), we have $K W^2+K_1=0$. Because the generating curve is the graph of a polynomial $f(s)=\sum_{n=0}^m a_n s^n$, we have a polynomial equation on $s$:
\begin{equation}\label{pk}
P(s):=\sum_{n=0}^k A_k s^k=0.
\end{equation}
 Therefore, all coefficients must be zero: $A_n=0$. We distinguish the three cases of axis.
\begin{enumerate}
\item The axis is timelike. If the curve $\gamma$ is not a graph on the $x$-axis, the surface is the 
Lorentzian cylinder $x^2+y^2=r^2$, with $K=0$. Now we use the parametrization (\ref{p-1}) and we have
$$K=\frac{h^2-s^3f'f''}{(-h^2+s^2-s^2f'^2)^2}.$$
If $K=0$ and if $m\geq 2$, the degree of $P$ is $2m$, whose leader coefficient is $-m^2(m-1)a_m^2$. This is not possible. If $m\leq 1$, (\ref{pk}) reduces to $h^2=0$: contradiction.

When $K$ is a non-zero constant, the degree of $P$ is $k=4m$ if $m\geq 2$. The leader coefficient is $m^4 a_m^4 K$: contradiction. If $m=1$, the degree of $P$ is $k=4$, with $A_4=K(1-a_1^2)^2$. Hence we obtain $a_1=\pm 1$. Now $P=h^2(-1+h^2K)=0$ and so, $K=1/h^2$. When $m=0$, the degree of $P$ is $4$ again with $A_4=K$: contradiction.

\item The axis is spacelike. As the helicoid of third kind has not constant Gauss curvature, we use  (\ref{p-2}) and the expression of $K$ is
$$K=\frac{h(h(1-f'^2)^2-h(f-sf')f'')}{ ((f-sf')^2+(h^2-s^2+f^2)(1-f'^2))^2}.$$
If $K=0$ and if $m\geq 2$,  the degree of $P$ is $k=4m-4$ with leader coefficient $-h^2m^4 a_m^4$: contradiction. If $m=1$, then $P=h^2(1-a_1^2)$. Thus $a_1=\pm 1$, and $a_0\not=0$ in order to have $W\not=0$; if $m=0$, $P=-h^2=0$: contradiction.

Assume that $K\not=0$. If $m\geq 2$, the degree of $P$ is $8m-4$ with leader coefficient $m^4 a_m^8 K$: contradiction. When $m=1$, $k=4$ with $A_4=K(1-a_1^2)^4$. Then $a_1=\pm 1$. Now $P$ reduces to
$Ka_0^4=0$. Then $a_0=0$, but now $W=0$: contradiction. When $m=0$, the degree of $P$ is $k=4$   with $A_4=K$: contradiction.

\item The axis is lightlike. We point out that if $\gamma$ is not a graph on the $y$-axis, the corresponding surface is the parabolic null cylinder, which it is a surface with $K=0$. On the other hand, the value of $K$ using  (\ref{p-3}) is
$$K=\frac{1+2hf'f''}{4(s+hf'^2)^2}.$$
Let $K=0$. If $m\geq 2$, the degree of $P$ is $2m-3$ with leader coefficient $-8h^3m^2(m-1)a_m^2$, which it is a contradiction. If $m\leq 1$, $P=-4h^2$: contradiction.

When $K$ is a non-zero constant,  the degree of $P$ is $k=4m-4$ if $m\geq 2$. The leader coefficient is $16h^4 m^4 a_m^4 K$: contradiction. If $m\leq 1$, the degree of $P$ is $k=2$, with $A_2=16h^2 K$: contradiction.
\end{enumerate}

%%%%%%%%%%%%%%%%%%%%%%%%%%%%%%%%%%%%%%%%%%%%%


\begin{thebibliography}{99}

\bibitem{dk} Dillen, F., K\"{u}hnel, W.: Ruled Weingarten surfaces in Minkowski 3-space. Manuscripta math. 98, 307--320 (1999)


\bibitem{hn1} Hano, J.  Nomizu, K.: On isometric immersinos of the hyperbolic plane into the Lorentz-Minkowski space and the Monge-Amp\`ere equation of a certain type. Math. Ann. 262, 245--253 (1983)

\bibitem{hn2}   Hano, J.  Nomizu, K.: Surfaces of revolution with constant mean curvature in Lorentz-Minkowski space.
 T\^ohoku Math. J. 36, 427--437 (1984)

\bibitem{hj} Hou, Z. H., Ji, F.: Helicoidal surfaces with $H^2=K$ in Minkowski 3-space. J. Math. Anal. Appl. 325, 101--113 (2007)

\bibitem{ko}  Kobayashi, S.: Maximal surfaces in the 3-dimensional Minkowski space. Tokyo J. Math. 32, 297--309 (1983)

\bibitem{lls} L\'opez, F. J.,  L\'opez, R.,  Souam, R.:
 Maximal surfaces of Riemann type in Lorentz-Minkowski space $L^3$.
 Michigan Math. J. 47, 469--497 (2000)

\bibitem{lo1} L\'opez, R.:
Timelike surfaces with constant mean curvature in Lorentz three-space.  T\^ohoku Math. J.  52,   515--532 (2000)

\bibitem{lo2} L\'opez, R.:
Surfaces of constant Gauss curvature in Lorentz-Minkowski 3-space. Rocky Mount. J.  Math.  33, 971-993 (2003)
\bibitem{mp}  Mira, P., Pastor, J.A.:
Helicoidal maximal surfaces in Lorentz-Minkowski space.
Monatsh.  Math. 140, 315--334 (2003)

\bibitem{on} O'Neill,  B.: SemiRiemannian geometry with application to general relativity. Academic. Press, New York, (1983)
\bibitem{st} Strubecker, K.: Differentialgeometrie III, Theorie der Fl\"{a}chenkr\"{u}ummung. Sammlung G\"{o}schen 1180, Walter de Gruyter, Berlin, 1959


\bibitem{we}   Weinstein, T.: An Introduction to Lorentz Surfaces. Walter de Gruyter, (1996)


\end{thebibliography}
\end{document}